\newcommand{\bt}{\begin{theorem}}                               
\newcommand{\et}{\end{theorem}}                                 
\newcommand{\bd}{\begin{definition}}                            
\newcommand{\ed}{\end{definition}}                              
\newcommand{\bl}{\begin{lemma}}                                 
\newcommand{\el}{\end{lemma}}                                   
\newcommand{\bpr}{\begin{proposition}}                  
\newcommand{\epr}{\end{proposition}}                    
\newcommand{\bere}{\begin{remark}}                      
\newcommand{\ere}{\end{remark}}                                 
\newcommand{\beq}{\begin{equation}}
\newcommand{\eeq}{\end{equation}}
\def\bal#1\eal{\begin{align}#1\end{align}}                      
\def\baln#1\ealn{\begin{align*}#1\end{align*}}          
\def\bml#1\eml{\begin{multline}#1\end{multline}}        
\def\bmln#1\emln{\begin{multline*}#1\end{multline*}}  
\def\bga#1\ega{\begin{gather}#1\end{gather}}
\def\bgan#1\egan{\begin{gather*}#1\end{gather*}}
\newcommand{\de}{\mathrm{d}}                        
\newcommand{\N}{\ensuremath{\mathbb{N}}\xspace}     
\newcommand{\R}{\ensuremath{\mathbb{R}}\xspace}     
\newcommand{\eps}{\varepsilon}
\newtheorem{theorem}{Theorem}[section]
\newtheorem{lemma}[theorem]{Lemma}
\newtheorem{proposition}[theorem]{Proposition}
\theoremstyle{definition}
\newtheorem{definition}[theorem]{Definition}
\theoremstyle{remark}
\newtheorem{remark}[theorem]{Remark}
\newcommand{\dist}{\ensuremath{\mathrm{d}}\xspace}
\newcommand{\distg}{\ensuremath{\mathrm{dist}^h}\xspace}
\newcommand{\nablag}{\ensuremath{\nabla^h}\xspace}
\author[E. Caponio]{Erasmo Caponio}
\address{Dipartimento di Matematica, Politecnico di Bari, Via Orabona 4,
70125, Bari, Italy}
\email{caponio@poliba.it}
\author[M. A. Javaloyes]{Miguel Angel Javaloyes}
\address{Departamento de Geometr\'{\i}a y Topolog\'{\i}a.
 Facultad de Ciencias, Universidad de Granada.
 Campus Fuentenueva s/n, 18071 Granada, Spain}
\email{ma.javaloyes@gmail.com, majava@ugr.es}
\author[A. Masiello]{Antonio Masiello}
\address{Dipartimento di Matematica,
Politecnico di Bari, Via Orabona 4,
70125, Bari, Italy}
\email{masiello@poliba.it}
\thanks{EC and AM are supported by M.I.U.R. Research project PRIN07 ``Metodi Variazionali  e Topologici nello Studio di Fenomeni Nonlineari''}
\thanks{MAJ is partially supported by Regional J.
Andaluc\'{\i}a Grant P06-FQM-01951, by Fundaci\'on S\'eneca project 04540/GERM/06   and by Spanish MEC Grant MTM2009-10418.}
\title[Finsler geodesics in the presence of a convex function]%
{Finsler geodesics in the presence of a convex function and their applications}
\subjclass[2000]{53C60, 58E10, 83C10}
\keywords{non-reversible Finsler metrics, geodesics,  spacetimes, light rays}
\date{}
\begin{document}
\begin{abstract}
In this paper, we obtain a result about the existence of only a finite number of
geodesics between two fixed non-conjugate points in a Finsler
manifold  endowed with a  convex function. We apply it  to Randers
and Zermelo metrics. As a by-product, we also get a result about the
finiteness of the number of lightlike and timelike geodesics
connecting an event to a line in a standard stationary spacetime.
\end{abstract}

\maketitle
\begin{section}{Introduction}
 In this paper, we extend to Finsler metrics a result about the
finiteness of the number of geodesics joining two fixed points on a
Riemannian manifold, see \cite{GiMaPi99}. Moreover, we present two
applications of this abstract result. First we show that, under
suitable assumptions, the number of lightlike or timelike geodesics
with fixed arrival proper time joining an event and a timelike
curve in a stationary spacetime is finite.  Afterwards,  we study the
finiteness of geodesics joining two given points in a manifold
endowed with a Zermelo metric.

Let $(M,F)$ be a  non-reversible Finsler manifold; then two conditions
of completeness are available: the forward and the
backward completeness. As a consequence of the non-reversibility of the
metric, the distance naturally associated to  a   Finsler metric is
not symmetric. The distance $\dist(p,q)$ between two points $p$ and
$q$ of $M$ is defined as the infimum of all the lengths, with
respect to the Finsler structure $F$, of curves joining $p$ and $q$
on $M$, so it is
\[
 \dist (p,q)=\inf_{\gamma\in  \Omega(p,q)  }\int_0^1 F(\gamma,\dot\gamma)\dist s,
\]
where $\Omega(p,q)$ is the set of all the piecewise smooth curves
from $p$ to $q$. A {\it forward (backward)} Cauchy sequence is a
sequence $\{x_i\}_{i\in\N}$ such that for every $\varepsilon>0$
there exists $N\in\N$ with $\dist(x_i,x_j)<\varepsilon$ for every
$j>i\geq N$ ($i>j\geq N$). The Finsler manifold $(M,F)$ is said {\em
forward (backward) complete} if all the forward (backward) Cauchy
sequences converge. By the Finslerian Hopf-Rinow theorem (see
\cite[Theorem 6.6.1 and Exercise 6.6.7]{BaChSh00}) forward
(backward) completeness is equivalent to forward (backward)
geodesic completeness. We recall that the  curve with the reverse
parametrization  of a geodesic for a non-reversible Finsler
metric is not necessarily a geodesic. For this reason, we say that
the metric is forward (backward) geodesically complete when
geodesics with constant speed can be extended up to $+\infty$ (up to
$-\infty$).

The  main  result of the paper is the following: given a  forward or
backward complete   Finsler manifold that admits a $C^2$ strictly
convex function having  a  non-degenerate  minimum point,  then the
number of geodesics between two non-conjugate points is finite (see
Theorem \ref{geonfinito}).  We will also study the existence of such
convex functions  for Randers, Zermelo and Fermat metrics.

Randers metrics were introduced in \cite{Rander41} in order to study
electromagnetic fields in general relativity. Zermelo metrics were
introduced in \cite{Zermel31} to study the least time travel path of
a body moving under the influence of a mild wind. Fermat metrics are
a particular type of Randers metrics defined on a spacelike
hypersurface of a standard stationary spacetime. They come into play
in the development of a variational theory for lightlike or timelike
geodesics on a standard stationary spacetime,  see \cite{CJM08}.
Such variational theory allows one to give a mathematical model for
the {\it gravitational lensing effect} in astrophysics, see
\cite{GMPJMP,PE}.

Randers, Fermat and Zermelo metrics provide the same family of
Finsler metrics (see  for example \cite[Proposition 3.1]{BiJa08}),
but they are defined adding to a Riemannian metric on a manifold $M$
a different geometric object, as a vector field, a positive function
or a one-form. For this reason we study them separately. More
precisely, since several results are known on the existence of
convex functions in Riemannian Geometry (see Section
\ref{randerssection}), we shall study when a convex function for a
Riemannian metric on a manifold $M$ still remains convex passing to
one of the Finsler structures above (see
Propositions~\ref{convessoRanders}, \ref{improve} and
\ref{Zermelofinite}).

The paper is structured as follows. In Section \ref{finito} we give
some  basic notions about Finsler geometry and we obtain the main
result about the existence of a finite number of geodesics joining
two fixed points in the presence of a convex function (see Theorem
\ref{geonfinito}). Section \ref{appli} is devoted to applications.
In subsection \ref{randerssection} we obtain a finiteness result for
Randers metrics (see Proposition \ref{finitezzaRanders}). In
subsection \ref{stationaryspacetimes} we use the Fermat metric to
obtain some results about the finiteness  of the number of lightlike
geodesics or timelike geodesics with fixed  arrival proper time,
between an event and a stationary observer (see Proposition
\ref{improve}). Finally in subsection \ref{Zermelo} we deduce a
finiteness result for Zermelo metrics (see Proposition
\ref{Zermelofinite}).

\end{section}

\begin{section}{A finiteness result in the presence of a convex function}\label{finito}
Let $M$ be a smooth, connected, finite dimensional manifold and let
$TM$ be the tangent bundle of $M$; a non-reversible Finsler metric on
$M$ is a function $F\colon TM\to[0,+\infty)$ which is
\begin{enumerate}
\item[1)] continuous on $TM$, $C^{\infty}$ on $TM\setminus 0$,
\item[2)] fiberwise positively homogeneous of degree one, i. e.   $F(x,\lambda y)=\lambda F(x,y)$, for all $x\in M$,   $y\in T_x M$ and  $\lambda>0$,
\item[3)] the square $F^2$ is fiberwise strictly convex i.e. the matrix
\[
g_{ij}(x,y)=\left[\frac{1}{2}\frac{\partial^2 (F^2)}{\partial y^i\partial y^j}(x,y)\right]
\]
is  positive definite for any $(x,y)\in TM\setminus 0$.
\end{enumerate}
The tensor
\[
g=g_{ij}\de x^i\otimes\de x^j
\]
is called the {\em fundamental tensor} of the Finsler manifold
$(M,F)$; it is a symmetric section of the tensor bundle
$\pi^*(T^*M)\otimes \pi^*(T^*M)$, where $\pi^*(T^*M)$ is the dual of
the pulled-back tangent bundle $\pi^*TM$ over $TM\setminus 0$ ($\pi$
is the projection $TM\to M$).

The {\em Chern connection} $\nabla$ is the unique linear connection
on $\pi^* TM$ whose connection $1$-forms $\omega^{\,\, i}_j$ are
torsion free and almost $g$-compatible (see \cite[Theorem
2.4.1]{BaChSh00}). By using the Chern connection, one can define two
different covariant derivatives $D_T W$ of a smooth vector field $W$
along a smooth regular curve $\gamma=\gamma(s)$ on $M$, with
velocity field
 $T=\dot\gamma$: \baln
&D_{T}W=\left.\left(\frac{\de W^i}{\de t}+W^jT^k\Gamma^i_{\,\, jk}(\gamma,T)\right)\frac{\partial}{\partial x^i}\right\vert_{\gamma(t)}&&\text{\em with reference vector $T$,}\\
&D_{T}W=\left.\left(\frac{\de W^i}{\de t}+W^jT^k\Gamma^i_{\,\,
jk}(\gamma,W)\right)\frac{\partial}{\partial
x^i}\right\vert_{\gamma(t)}&&\text{\em with reference vector
$W$,}\nonumber \ealn where the functions $\Gamma^i_{\,\, jk}$ are
called the components of the Chern connection $\nabla$ and they are
defined by the relation $\omega_j^{\,\, i}=\Gamma^i_{\,\, jk}\de
x^k$ . A geodesic of the Finsler manifold $(M,F)$ is a smooth
regular curve $\gamma$ satisfying the equation
\[
D_{T}\left(\frac{T}{F(\gamma,T)}\right)=0,
\]
with reference vector $T=\dot\gamma$. A curve $\gamma=\gamma(s)$ is
said to have {\em constant speed} if $F(\gamma(s),\dot \gamma (s))$
is constant along $\gamma$. Geodesics are characterized as the
critical points of the length and the energy functionals when
considered in a suitable class of curves joining two points.

Let $(M,F)$ be a Finsler manifold. In analogy with the Riemannian
case we say that a  function $f\colon M\to \R$ is {\em convex}
(resp. {\em strictly convex}) if for every constant speed geodesic
$\gamma\colon I\subset\R\to M$, $f\circ\gamma\colon I\to \R$ is
convex (resp. strictly convex). Let $f\colon M\to\R $ be a  $C^2$
function; a {\em critical point} $x\in M$ of $f$ is a point where
the differential of the function $\de f(x)$ is equal to 0 . The
Finslerian Hessian   $H_f$ of $f$ is the symmetric section of the
tensor bundle $\pi^*(T^*M)\otimes \pi^*(T^*M)$ over $TM\setminus 0$
given by $\nabla(\de f)$, where $\nabla$ is the Chern connection
associated to the Finsler metric $F$. In natural coordinates on
$TM\setminus 0$, the Finslerian Hessian   $H_f$ of $f$ is given by
\[(H_f)_{ij}(x,y)u^iv^j=\frac{\partial^2f}{\partial x^i\partial x^j}(x)u^iv^j-\frac{\partial f}{\partial x^k}(x)\Gamma^k_{\,\,ij}(x,y)u^iv^j.\]
Clearly the functions $(H_f)_{ij}$ are symmetric with respect to the
indexes $i$,$j$, since the components $\Gamma^i_{\,\, jk}$ of the
connection are symmetric with respect to $i$,$j$.

If $\gamma$ is a constant speed geodesic of $(M,F)$, then the second
derivative of the function $g(s)=f(\gamma(s))$ is given by
$g''(s)=(H_f)_{(\gamma(s),\dot\gamma(s))}(\dot
\gamma(s),\dot\gamma(s))$. Thus a $C^2$  function $f$  is convex iff
for every $(x,y)\in TM\setminus 0$, $(H_f)_{(x,y)}(y,y)\geq 0$ and
it is strictly convex if $(H_f)_{(x,y)}(y,y)> 0$  (see also
\cite[Appendix 4]{Udriste}).

A critical point $x$ of $f$ is called {\em non-degenerate} if
$(H_f)_{(x,y)}(y,y)\neq 0$ for any $y\in T_xM$, $y\neq 0$.

The following two propositions are useful to prove the main theorem
of this section. \bpr \label{prop:minimum} Let $(M,F)$ be a  forward
or backward complete Finsler manifold and let $f\colon M\to \R$ be a
$C^2$ convex  function having a non-degenerate critical point $p_0$.
Then $p_0$ is a global minimum  point for $f$ and it is the unique
critical point of $f$. \epr
\begin{proof}
Let $\gamma\colon[a,b]\to M$ be a non-constant geodesic starting at
$\gamma(a)=p_0$. Then the function $g(s)=f(\gamma(s))$ is convex in
$[a,b]$, that is, $g''(s)\geq 0$ for any $s\in[a,b]$. As $p_0$ is a
non-degenerate  critical point, $g'(a)=0$ and $g''(a)>0$. Clearly
$g'$ is an increasing function in $[a,b]$, so that $g'\geq 0$.
Assume that there exists a point $s_0\in]a,b]$ such that
$g'(s_0)=0$, then $g'(s)=0$ for any $s\in[a,s_0]$, which is in
contradiction with $g''(a)>0$. Therefore $g'(s)>0$ for every $s\in
[a,b]$, hence $f(p_0)<f(\gamma(s))$ for any $s$. Now, let $q\in M$
an arbitrarily chosen point of $M$, by the Finslerian Hopf-Rinow
theorem there exists a geodesic $\gamma_q \colon[a,b]\to M$ such
that $\gamma_q(a)=p_0$ and $\gamma_q(b)=q$. Since we have shown that
$f(p_0)<f(\gamma_q(b))=f(q)$ and $g'(b)>0$,  it follows that $p_0$ is
a global minimum and the function $f$ does not admit other critical
points.
\end{proof}
\bpr \label{prop:ilimitado} Let $f\colon M\to \R$ be a $C^2$ convex
function of a  forward or backward complete Finsler manifold $(M,F)$
and suppose that there exists a non-degenerate critical point $p_0$
of $f$ (unique by Proposition \ref{prop:minimum}). Then
\[\lim_{d(p_0,x)\to \infty}f(x)=+\infty,\quad\quad\text{and}\quad\quad
\lim_{d(x,p_0)\to \infty}f(x)=+\infty.\]
\epr
\begin{proof}
By Proposition \ref{prop:minimum} if $p_0$ is a non-degenerate
critical point of $f$, then it is a global minimum and the unique
critical point of $f$. We prove now that for any diverging sequence
$(x_n)_{n\in{\mathbb{N}}}$ in $M$, it holds that $\lim_{n\to
\infty}f(x_n)=+\infty$.

We assume that the Finsler manifold is forward complete (in the
backward completeness case the proof is analogous).

Let $A$ be a normal neighborhood of $p_0$, i.e., there exists a
star-shaped open neighborhood $U$ of zero $U\subset T_{p_0}M$ such
that $\exp_{p_0}\colon U\to A$ is a diffeomorphism of class $C^1$ in
$U$ and $C^\infty$ in $U\setminus\{0\}$ (see \cite[\S
5.3]{BaChSh00}). For all $x\in A\setminus\{p_0\}$, set
$u(x)=\exp_{p_0}^{-1}(x)/F(p_0,\exp_{p_0}^{-1}(x))$,  define
$\gamma_x\colon]0,F(p_0,\exp_{p_0}^{-1}(x))]\to M$ as
$\gamma_x(s)=\exp_{p_0} (su(x))$ and
\[\phi^+(x)=\left.\frac{\de}{\de s}\right|_{s=F(p_0,\exp_{p_0}^{-1}(x))}f(\gamma_x(s)).\]
Clearly $\phi^+$ is $C^2$ in $A\setminus\{p_0\}$ and non-negative, because
convex functions have increasing derivative.
Now fix  $r\in\R$
small enough such that the sphere $S^+_r(p_0)=\{y\in M\ |\ \dist(p_0,x)=r\}$ is  contained in $A$, and define
\[\delta^+_0=\min_{x\in S^+_r(p_0)}\phi^+(x).\]
 The number
$\delta^+_0$  is positive, because otherwise there would exist $x\in
S^+_r(p_0)$ such that $\phi^+(x)=0$ and $f\circ\gamma_x$ would be a
constant function, in contradiction with the hypothesis that $p_0$
is non-degenerate.

Now,  set
\[
f^+_0=\min_{x\in S^+_r(p_0)}f(x)>-\infty.
\]
Moreover, let $\gamma_n\colon[0,b_n]\to M$ be any minimal geodesic
from $p_0$ to $x_n$ having constant speed equal to one, and let
$g_n:[0,b_n]\rightarrow \R$  be defined as $g_n(s)=f(\gamma_n(s))$.
Since the sequence $(x_n)_{n\in{\mathbb{N}}}$ diverges, we can suppose
that $b_n>r$, so that $p_n=\gamma_n(r)$ is well-defined. By the
convexity of $f$, we get
\begin{align*}
f(x_n)=g_n(b_n)&\geq g_n(r)+g'_n(r)(b_n-r)\\
&=f(p_n)+\phi^+(\gamma_n(r)) (b_n-r)\\
&\geq f^+_0+\delta^+_0(\dist (p_0,x_n)-r)\stackrel{n}{\longrightarrow}+\infty.
\end{align*}
We can analogously also prove that 
$\lim_{\mathrm{d}(x,p_0)\to-\infty}f(x)=+\infty$, by considering the
minimizing unit speed geodesic $\gamma_n\colon[c_n,0]\to M$ from
$x_n$ to $p_0$,  the {\em backward exponential} and a compact
backward sphere $S^-_r(p_0)$ contained in the image of the domain of
the backward exponential. The backward exponential is defined as
$\exp^-_{p_0}(v)=\gamma_v(-1)$, where $\gamma_v$ is the unique
constant speed geodesic with $\gamma_v(0)=p_0$ and $\dot
\gamma_v(0)=v$ and
\[\phi^-(x)=\left.\frac{\de}{\de s}\right|_{s=-F\big(p_0,(\exp^-_{p_0})^{-1}(x)\big)}f(\gamma_x(s)).\]
where  $\gamma_x\colon[-F\big(p_0,(\exp^-_{p_0})^{-1}(x)\big),0]\to M$ is now given by $\gamma_x(s)=\exp^-_{p_0}(s u(x))$
and $u(x)=-(\exp^-_{p_0})^{-1}(x)/F\big(p_0,(\exp^-_{p_0})^{-1}(x)\big)$.
Thus we have
\begin{align*}
f(x_n)=g_n(c_n)&\geq g_n(-r)+g'_n(-r)(c_n+r)\\
&=f(p_n)+\phi^-(\gamma_n(-r)) (c_n+r)\\
&=f(p_n)-\phi^-(\gamma_n(-r)) \big(\dist(x_n,p_0)-r)\\
&\geq f^-_0-\delta^-_0(\dist (x_n,p_0)-r)\stackrel{n}{\longrightarrow}+\infty,
\end{align*}
where $f^-_0$ is the minimum value of $f$ on $S^-_r(p_0)$ and $\delta^-_0<0$ is the maximum value of $\phi^-$ on $S^-_r(p_0)$.
\end{proof}
\bere
Similar to  \cite[Proposition 2.5 and Lemma
2.6]{BisO'N69},  it can be proved that if a manifold $M$ admits a
$C^1$ function $f$ with locally Lipschitz differential, having a
unique critical point which is a global minimum and having compact
sublevels $f^c=\{x\in M\ |\ f(x)\leq c\}$, $c\in \mathbb{R}$, then
it is contractible. So a forward or backward complete Finsler
manifold  admitting  a $C^2$ convex function  having a
non-degenerate minimum point is contractible (observe that in this
case the sublevels of $f$ are compact as a consequence of the
Finslerian Hopf-Rinow theorem and Proposition \ref{prop:ilimitado}).
Apart from those in \cite{BisO'N69}, other results about the
topological and differentiable structure of a Riemannian manifold
endowed with a (non-necessarily $C^2$) convex function can be found
in \cite{bang, GreShi81, GreShi81a}. \ere

\bt\label{geonfinito} Let $(M,F)$ be a forward or backward complete
Finsler manifold that admits a $C^2$ function $f\colon M\to\R$ with
positive definite Hessian $H_f$ everywhere and that has a minimum
point. If $p$ and $q$ are non-conjugate points of $M$, then the
number of geodesics in $M$ joining $p$ and $q$ is finite. \et
\begin{proof}
We begin by showing that  a compact subset $C\subset M$ containing
the image of every geodesic joining $p$ and $q$ does exist. Indeed,
set
\[d=\max \{f(p),f(q)\},\]
since convex functions reach the maximum at the endpoints of the
interval, it follows that $f(\gamma(s))\leq d$ and $\gamma
([0,1])\subseteq f^d$. By Proposition \ref{prop:ilimitado} and the
Finslerian Hopf-Rinow theorem, the subset $C=f^d$ is compact.

We now  claim that there exists a constant $E_0$ such that
\beq\label{e0} F(\gamma ,\dot \gamma)\leq E_0, \eeq for every
geodesic $\gamma \colon[0,1]\to M$  connecting $p$ to $q$. To prove
it by contradiction, let us  assume that there exists a sequence of
geodesics $\gamma_n\colon[0,1]\to M$ joining $p$ and $q$  and having
constant speed $E_n$  with $E_n\to+\infty$, as $n\to \infty$.
Consider  the speed one geodesics $y_n\colon[0,E_n]\to M$ given by
$y_n(s)=\gamma_n(s/E_n)$. The sequence of vectors $\{\dot
y_n(0)\}\subset T_pM$  admits  a subsequence converging to $v\in
T_pM$. Moreover, as the images of the curves $y_n$ are contained in
$C$, the image of the geodesic $y$, such that $y(0)=p$ and $\dot
y(0)=v$, is  also contained in $C$. Since $H_f$ is positive
definite, there exists a constant $\lambda_0=\lambda_0(C)>0$ such
that, for all $p\in C$ and all $v\in T_pM$, the following holds:
\[(H_f)_{(p,v)}(v,v)\geq \lambda_0F^2(p,v).\]
So if we set $\rho(s)=f(y(s))$, then
\[\rho''(s)=(H_f)_{(y(s),\dot y(s))}(\dot y(s),\dot y(s))\geq \lambda_0F^2(y(s),\dot y(s))=\lambda_0>0,\]
for every $s\in [0,+\infty)$ and hence $\lim_{s\to
\infty}\rho(s)=+\infty$, which is in contradiction to the fact that
the image of $y$ is contained in the compact set $C$.

Now we can conclude the proof observing that if there exists an
infinite number of geodesics connecting $p$ to $q$, we can consider
a sequence of such  geodesics $\gamma_m\colon [0,1]\to M$,  having
initial vectors $\dot\gamma_m(0)$. From \eqref{e0}, the sequence
$\dot\gamma_m(0)$ is contained in a compact subset of $T_pM$; hence,
it converges, up to pass to a subsequence, to a vector $v\in T_pM$.
Then, by a standard argument on the continuous dependence of
solutions of ODEs with respect to initial  data,  the geodesics
$\gamma_m$ uniformly converge to the geodesic $\gamma\colon [0,1]\to
M$ satisfying the initial conditions $\gamma(0)=p,\dot\gamma(0)=v$.
By uniform convergence, we also have  $\gamma(1)=q$. Thus, the
Finslerian exponential map $\exp_p$ is not injective in a
neighborhood of $v$, in contradiction with the fact that $p$ and $q$
are two non-conjugate points (see \cite[Prop. 7.1.1]{BaChSh00}).
\end{proof}
\bere  It is well known that if a manifold $M$ is
non-contractible in itself (for instance $M$ is compact),  then for any
Finsler metric $F$ on the manifold $M$, there exist infinitely many
geodesics joining two arbitrary points $p$ and $q$ of $M$, see
\cite{{CJM08}}. On the other hand, as in the Riemannian case, there
are circumstances in which the number of geodesics connecting any
two points on $(M,F)$ is exactly equal to $1$. For instance, the
Cartan-Hadamard Theorem holds for forward complete Finsler manifolds
having non positive flag curvature; thus if $M$ is simply connected,
the exponential map is a $C^1$ diffeomorphism from the tangent space
at any point of $M$ onto $M$ (see \cite[Theorem 9.4.1]{BaChSh00}).
Under the assumptions of Theorem~\ref{geonfinito},  the
existence of infinitely many geodesics is excluded, but the existence of multiple
geodesics between two points is allowed. This fact seems to be
interesting in the gravitational lens effect, where a multiplicity
of light rays occurs between an
observer and the world line of a source, see \cite{GMPJMP,PE}. \ere
\end{section}
\begin{section}{Applications}\label{appli}
\begin{subsection}{Randers metrics}\label{randerssection}
Let $(M,h)$ be a Riemannian manifold and  let $\omega$ be a one form on
$M$ such that for any $x \in M$,
\begin{equation}\label{omegaminusone}
\|\omega\|_x = \sup_{v\in T_x M\setminus
0}\frac{|\omega(v)|}{\sqrt{h(v,v)}} < 1.
\end{equation}
Then the Randers metric associated with $h$ and $\omega$ is the
Finsler metric $F$ on $M$ defined as \beq\label{randers}
F(x,y)=\sqrt{h(y,y)}+\omega(y). \eeq The couple $(M,F)$ with $F$
given by (\ref{randers}) is called \textit{Randers manifold}. Let us
observe that the condition $\|\omega\|_x<1$, for all $x\in M$,
implies not only that $F$ is positive, but also that it has
fiberwise strongly convex square (see  \cite[\S 11.1]{BaChSh00}).

Such type of Finsler metrics, with $h$ Lorentzian,  were considered in
1941 by G. Randers  in a paper (see \cite{Rander41}) about the
equivalence of relativistic electromagnetic theory (where the four
dimensional space-time is endowed with  a metric of the form
\eqref{randers})  and the five-dimensional Kaluza-Klein
theory.
\begin{remark}\label{completo}
As observed in \cite[Remark 4.1]{CJM08}, if the Riemannian metric
$(M,h)$ is complete and \beq\label{superscemo} \|\omega\| =
\sup_{x\in M}\|\omega\|_x < 1, \eeq then the Randers manifold
$(M,F)$ is forward  and backward  complete.
\end{remark}
By using the Levi-Civita  connection \nablag of the  metric $h$, the
geodesic equations of a Randers metric,  parametrized to have
constant Riemannian speed, can be written as  (see \cite[p.
297]{BaChSh00}) \beq\label{geomagRiem} \nablag_{\dot\sigma}\dot
\sigma=\sqrt{h(\dot\sigma,\dot\sigma)}\hat \Omega(\dot\sigma), \eeq
where $\hat \Omega$ is the $(1,1)$-tensor field  metrically
equivalent to $\Omega=\de \omega$, i.e for every $(x,v)\in TM$,
$\Omega(\cdot,v)=h(\cdot,\hat \Omega(v))$. We observe that if we
define a vector field $B$ such that $\omega(v)=h(B,v)$, then Eq.
\eqref{geomagRiem} can be expressed as
\[
\nablag_{\dot\sigma}\dot \sigma=\sqrt{h(\dot\sigma,\dot\sigma)} {\rm Curl} B(\dot\sigma),
\]
where ${\rm Curl} B(v)$ is the vector satisfying
\[h({\rm Curl} B(v),w)=h(\nabla^h_w B,v)-h(\nabla^h_v B,w)\]
for every $v,w$ in $T_xM$.

The existence of convex functions is known for several classes of
Riemannian manifolds. For instance, let $M = \mathbb{R}^N$ and let
$h_0$ be the standard Riemannian metric on $\mathbb{R}^N$ and
consider a conformally equivalent metric $h$ to $h_0$, so there
exists a smooth, positive function $\eta\colon
\mathbb{R}^N\to\mathbb{R}$ such that $h=\eta(x) h_0$. Then, if
$$\eta(x)-\frac{3}{2} |\nabla \eta|\cdot|x|>0,$$
where $|\cdot|$ denotes the Euclidean norm, then the function
$G(x)=|x|^2$ is strictly convex for the conformal metric $h$ (see
\cite[Lemma 3.1]{GiMaPi99}). Moreover, if $(M,h)$  is a complete
non-compact manifold having non-negative sectional curvature, the
Busemann function with changed sign  is convex (see
\cite{CheGro72}). Finally, on a simply connected complete Riemannian
manifold with non-positive sectional curvature the smooth function
$x\to \left(\distg (x_0,x)\right)^2$,  $x_0\in M$,  is strictly
convex (see \cite{BisO'N69}).

Let $(M,F)$ be a Randers manifold, with $F$ given by
(\ref{randers}). Our aim is to give conditions on the associated
vector field $B$ and on the covariant differential $\nabla^h B$
ensuring that a convex function with respect to the metric
Riemannian $h$ is still convex with respect to the Randers metric
$F$. To this end, we need to write the equation satisfied by a
geodesic, parametrized with constant Randers speed, using the
Levi-Civita connection $\nabla^h$ and not the Chern
connection. Since geodesics joining two fixed points are the
critical points of the length functional (with respect to the
Randers metric $F$)
\[
L(\gamma) = \int_0^1
\left[\sqrt{h(\dot\gamma,\dot\gamma)}+\omega(\dot\gamma)\right]{\rm
d}s,
\]
they satisfy the Euler-Lagrange equations
\begin{equation*}
\nablag_{\dot\sigma}\left(\dot \sigma/\sqrt{h(\dot\sigma,\dot\sigma)}\right)=\hat \Omega(\dot\sigma)
\end{equation*}
and, after some straightforward computations, we obtain
\begin{equation*} -\frac{\frac{\de}{\de
s}(\sqrt{h(\dot\sigma,\dot\sigma)})}{\sqrt{h(\dot\sigma,\dot\sigma)}}\dot
\sigma+\nablag_{\dot\sigma}\dot
\sigma=\sqrt{h(\dot\sigma,\dot\sigma)}\hat \Omega(\dot\sigma).
\end{equation*}
If $\sigma$ is parametrized with constant Randers speed
$\sqrt{h(\dot\sigma,\dot\sigma)}+\omega(\dot\sigma)$, we can replace
the term $\frac{\de}{\de s}(\sqrt{h(\dot\sigma,\dot\sigma)})$ in the
last equation by the term $-\frac{\de}{\de s}(\omega(\dot\sigma))$,
obtaining \beq\label{geomag2} \nablag_{\dot\sigma}\dot
\sigma=\sqrt{h(\dot\sigma,\dot\sigma)}\hat
\Omega(\dot\sigma)-\frac{\frac{\de}{\de
s}\left(\omega(\dot\sigma)\right)}{\sqrt{h(\dot\sigma,\dot\sigma)}}\dot\sigma.
\eeq

In the next proposition we compute $(H_f)(y,y)$, for each $(x,y)\in TM\setminus 0$,
using the  Levi-Civita connection of $h$.
\bpr
Let $f\colon M\to\R$ be a $C^2$ function. For each $(x,y)\in TM\setminus 0$ we have
\bml\label{hf}
H_f(y,y)=H^h_f(y,y)+\sqrt{h(y,y)}\,\,h\big(\nabla^h f,{\rm Curl}B(y)\big)\\-\frac{h(\nabla^h f,y)}{F(x,y)}\left(h(\nabla^h_yB,y)+\sqrt{h(y,y)}\,\,h(B,{\rm Curl}B(y))\right),
\eml
where  $\nabla^hf$ and $H^h_f$ denote, respectively, the gradient and the Hessian of $f$ with respect to the metric $h$.
\epr
\begin{proof}
Let $\sigma$ be a geodesic of $(M,F)$  parametrized with constant
Randers speed and such that $\sigma(0)=x$, $\dot\sigma(0)=y$. We set
$\rho(s)=f(\sigma(s))$.  From Eq. \eqref{geomag2},  recalling that
for any $(x,v)\in TM$, $\hat\Omega(v)=\mathrm{Curl}B(v)$,  we get
\begin{align}\label{deriv}
\rho''(s)=&H^h_f(\dot\sigma,\dot\sigma)+h(\nablag f,\nablag_{\dot\sigma}\dot\sigma)\nonumber\\
=&H^h_f(\dot\sigma,\dot\sigma)+h(\nablag f,\sqrt{h(\dot\sigma,\dot\sigma)}\,\,{\rm Curl}B(\dot\sigma))-\frac{\frac{\de}{\de s}\left(h(B,\dot\sigma)\right)}{\sqrt{h(\dot\sigma,\dot\sigma)}}h(\nablag f,\dot\sigma).
\end{align}
Now observe that  from \eqref{geomag2}, we get \beq \frac{\de}{\de
s}\left(h(B,\dot\sigma)\right)=
\frac{\sqrt{h(\dot\sigma,\dot\sigma)}}{F(\sigma,\dot\sigma)}\big(h(\nabla^h_{\dot\sigma}
B,\dot\sigma)+ \sqrt{h(\dot\sigma,\dot\sigma)}h(B,{\rm
Curl}B(\dot\sigma))\big). \label{due} \eeq Substituting \eqref{due}
into \eqref{deriv} we obtain \eqref{hf}.
\end{proof}
Now we  give a condition which ensures that a $h$-convex $C^2$ function
is  also convex with respect to $F$. We denote by $|\cdot|$ the norm
with respect to  the Riemannian metric $h$ and by  $\|\cdot\|$ the
corresponding norms for tensor fields  on $M$.
\bpr\label{convessoRanders} Let $f\colon M\to \R$ be a $C^2$
function which is convex with respect to the Riemannian metric $h$.
Assume that $f$ has a strictly positive Riemannian Hessian
$H^h_f=\nablag(\de f)$, i. e. there exists a strictly positive
function  $\lambda\colon M\to\R$ such that
$H^h_f(v,v)\geq\lambda(x)|v|^2$, for all $(x,v)\in TM$. Moreover
assume that
\[
3\|\de f\|\|\nabla^h B\|/(1-|B|)<\lambda(x).
\]
Then $f$ is strictly convex with respect to the Randers  metric $F$.
\epr
\begin{proof}
We have
\[\big|h(\nablag f,\sqrt{h(y,y)}{\rm Curl}B(y))\big|\leq 2\|\nabla^hB\|\|\de f\||y|^2\]
and
\baln
\lefteqn{
\left|\frac{h(\nabla^h f,y)}{F(x,y)}\left(h(\nabla^h_yB,y)+\sqrt{h(y,y)}h(B,{\rm Curl}B(y))\right)\right|}&\\
 &\leq\frac{1}{(1-|B|)|y|}\left(\|\nabla^hB\||y|^2+2\|\nabla^hB\||B||y|^2\right)\|\de f\||y|\\
&=\|\nabla^hB\|\frac{1+2|B|}{1-|B|}
\|\de f\||y|^2.
\ealn
Thus, from \eqref{hf} we obtain
\begin{equation*}
H_f(y,y)\geq \left(\lambda (x)-\frac{3\|\de f\|\|\nabla
B\|}{1-|B|}\right)|y|^2> 0.
\end{equation*}
\end{proof}
From the above proposition, Theorem~\ref{geonfinito} and
Remark~\ref{completo}, the following proposition also holds.
\bpr\label{finitezzaRanders} Let $M$ be a smooth manifold and let
$F$ be a Randers metric on $M$ satisfying \eqref{superscemo} and
assume that the Riemannian metric $h$ on $M$ is complete. Assume
that there exists a $C^2$ function $f\colon M\to \R$ having a
minimum point and Hessian $H^h_f$ satisfying
\[
H^h_f(v,v)\geq\lambda(x)|v|^2,\] for some positive function
$\lambda\colon M\to\R$ and for any $(x,v)\in TM$. If
\[\|\de
f\|\|\nabla B\|/(1-|B|)<\lambda(x),\]
then, for any couple $p$ and
$q$ of non-conjugate points for $(M,F)$, there exists only a
finite number of geodesics connecting $p$ to $q$ with respect to the
Randers metric $F$. \epr \bere

The hypothesis that  the points $x_0$ and $x_1$ are non-conjugate is
a reasonable assumption to have only a finite number of geodesics
between two points on a Riemannian or a Finsler manifold (for
example it forbids the existence of a continuum of geodesics with
endpoints $x_0$ and $x_1$). Anyway it is not a necessary condition.
Indeed on a Randers manifold, by using {\em Stationary-to-Randers
correspondence} \cite{CJS} (see also  next subsection) and some
bifurcation results for lightlike geodesics in a Lorentzian manifold
(see \cite[Proposition 13]{javpic}), it can be proved that if $x_0$
and $x_1$ are conjugate along the geodesic $\gamma$,
$\gamma\colon[0,1]\to M$, $\gamma(0)=x_0$, $\gamma(1)=x_1$, then
there  exists a continuum $(\gamma_{\eps})_{\eps\in [0,\eps_0)}$ of
geodesics, $\gamma_{\eps}\colon[0,a]\to M$, $a>1$, and a function
$s=s(\eps)\colon[0,\eps_0)\to [0,a]$ such that
$\gamma_{\eps}(0)=x_0$, for each $\eps\in [0,\eps_0)$,  $s(\eps)\to
1$, $\dot\gamma_{\eps}(0)\to \dot\gamma(0)$,  as $\eps\to 0$,  and
$\gamma_{\eps}(s(\eps))=\gamma(s(\eps))$.

Moreover, by Sard's Theorem and the fact that conjugate points  are
critical values of the exponential map, we know that the set of
non-conjugate points to a given point $x_0$ is generic in $M$. Again using
Stationary-to-Randers correspondence and a recent result about
genericity of the condition for being a point and a line in a
standard stationary spacetime non-conjugate (see \cite{ggp,gj}), we
have that the set of all the $C^2$ Riemannian metrics $h$ and the
set of all the $C^2$ one-forms $\omega$ on $M$, for which two fixed
distinct  points $x_0,\ x_1\in M$ are non-conjugate in the Randers
manifold $(M,\sqrt{h}+\omega)$, are generic in  the sets of all
the bilinear  forms  and all the one-forms on $M$,  with respect to
a suitable topology (in particular, such a topology implies
$C^2$-convergence on compact subsets of $M$).  Finally we mention that a
systematic study of the Finslerian cut locus can be found in \cite{Hassan}. \ere
\end{subsection}
\begin{subsection}{Applications to stationary spacetimes}\label{stationaryspacetimes}
In this subsection, we apply the results in Section~\ref{finito} to
the study of  causal geodesics connecting a point to a timelike
curve on a standard stationary Lorentzian  manifold.

A {\em standard stationary spacetime} is a Lorentzian manifold
$(L,l)$, where $L$ splits as a product $L=M\times \R$, $M$ is
endowed with a Riemannian metric $g_0$, and there  exist a vector
field $\delta$ and a positive function $\beta$ on $M$ such that the
Lorentzian metric $l$ on $L$ is given by
\begin{equation}\label{l}
l((y,\tau),(y,\tau))=g_0(y,y)+2g_0(\delta,y)\tau -\beta(x)\tau^2,
\end{equation}
for any $(x,t)\in M\times \R$ and  $(y,\tau)\in T_xM\times\R$. We
observe that a stationary spacetime, that is, a Lorentzian manifold
which  admits a timelike Killing field, is standard whenever the
timelike Killing field is complete and the spacetime is
distinguishing (see \cite{JS08}).

A curve  $(x(s),t(s))$ in $L$ is a future-pointing lightlike
geodesic if and only if $x$ is a geodesic for the Randers metric,
that we call {\em Fermat metric},  defined as \bal
&F(x,y)=\sqrt{p(\delta,y )^2+p(y
,y)}+p(\delta,y),\label{RandersRiemannian} \eal where
$p=\frac{1}{\beta}g_0$, parametrized with constant Riemannian speed
$p(\dot x,\dot x)+p(\delta, \dot x)^2$, and $t$ coincides, up to a
constant, with the Fermat length of $x$  (see \cite[Theorem
4.5]{CJM08}).

We  need to express the equation satisfied by Fermat geodesics using
the Levi-Civita connection of the metric $p$. For this reason we
denote by $|\cdot|_0$ the norm with respect to the Riemannian metric
$g_0$ and by $\nabla$ the Levi-Civita connection of $g_0$ or the
gradient with respect to $g_0$,  $|\cdot|_1$ and $\tilde\nabla$
denote the norm and the Levi-Civita connection of $p$,  while
$\|\cdot\|_0$ and $\|\cdot\|_1$  denote the corresponding norms of
the tensor fields on $M$. Moreover, in this subsection we set
$h(\cdot,\cdot)=p(\delta,\cdot)^2+p(\cdot,\cdot)$.
\begin{lemma}
A curve $\gamma$ in $(M,F)$, $F$ defined in
\eqref{RandersRiemannian}, parametrized  with   constant Riemannian
speed $ h(\dot\gamma,\dot\gamma)$, is a geodesic of $(M,F)$   if and
only if it satisfies the equation \beq\label{nablap} \tilde
\nabla_{\dot\gamma}\dot\gamma=F(\gamma,\dot\gamma)\tilde\Omega(\dot\gamma)-\frac{\de}{\de
s}\big(p(\delta,\dot\gamma)\big)\delta, \eeq where
$\tilde\Omega(y)=\tilde\nabla^*\delta(y)-\tilde\nabla\delta(y)$,
$\tilde\nabla \delta(y)=\tilde\nabla_{y}\delta$ and
$\tilde\nabla^*\delta$ is the adjoint with respect to $p$ of
$\tilde\nabla\delta$.
\end{lemma}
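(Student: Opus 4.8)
The plan is to avoid converting the Chern-connection geodesic equation and instead work variationally with the $p$-Levi-Civita connection $\tilde\nabla$ from the outset. First I would note that, since $h(\cdot,\cdot)=p(\delta,\cdot)^2+p(\cdot,\cdot)$ and $\omega:=p(\delta,\cdot)$, the Fermat metric reads $F(x,y)=\sqrt{h(y,y)}+\omega(y)$, i.e. $F$ is the Randers metric of the Riemannian metric $h$ and the one-form $\omega$. As recalled above, $F$-geodesics joining two points are the critical points of the Fermat length $L(\gamma)=\int_0^1 F(\gamma,\dot\gamma)\de s=L_h(\gamma)+A(\gamma)$, where $L_h(\gamma)=\int_0^1\sqrt{h(\dot\gamma,\dot\gamma)}\,\de s$ and $A(\gamma)=\int_0^1 p(\delta,\dot\gamma)\,\de s$. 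The goal is to compute the first variation of each summand entirely in terms of $\tilde\nabla$, so that the Euler--Lagrange equation comes out directly in the form \eqref{nablap}, with no need to pass through $\nabla^h$.

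For the one-form term I would use that $A(\gamma)=\int_\gamma\omega$, so its first variation along a fixed-endpoint variation with field $V$ is $\int_0^1 \de\omega(V,\dot\gamma)\,\de s$. Writing $\de\omega$ through $\tilde\nabla$ (legitimate since $\tilde\nabla$ is torsion-free and $\tilde\nabla p=0$) gives $\de\omega(V,\dot\gamma)=p(\tilde\nabla_V\delta,\dot\gamma)-p(\tilde\nabla_{\dot\gamma}\delta,V)=p\big(V,(\tilde\nabla^*\delta-\tilde\nabla\delta)(\dot\gamma)\big)=p(V,\tilde\Omega(\dot\gamma))$, where the adjoint $\tilde\nabla^*\delta$ enters exactly as in the statement. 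For the metric term I would differentiate $\sqrt{p(\dot\gamma,\dot\gamma)+p(\delta,\dot\gamma)^2}$, again using $\tilde\nabla p=0$ and the symmetry $\tilde\nabla_{\partial_\epsilon}\dot\gamma=\tilde\nabla_{\dot\gamma}V$, and then integrate by parts. Here the hypothesis that $\gamma$ is parametrized with constant Riemannian speed $R^2:=h(\dot\gamma,\dot\gamma)$ is what makes the computation clean: the factor $1/R$ is constant and passes through the integration by parts, and the three resulting contributions assemble into $\tfrac1R\,p\big(-\tilde\nabla_{\dot\gamma}\dot\gamma+p(\delta,\dot\gamma)\,\tilde\Omega(\dot\gamma)-\tfrac{\de}{\de s}(p(\delta,\dot\gamma))\,\delta,\;V\big)$, the adjoint $\tilde\nabla^*\delta$ again appearing from the cross term $p(\tilde\nabla_V\delta,\dot\gamma)$.

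Adding the two variations and imposing that the result vanish for every $V$ yields $\tilde\nabla_{\dot\gamma}\dot\gamma=\big(R+p(\delta,\dot\gamma)\big)\tilde\Omega(\dot\gamma)-\tfrac{\de}{\de s}(p(\delta,\dot\gamma))\,\delta$; since $R+p(\delta,\dot\gamma)=\sqrt{h(\dot\gamma,\dot\gamma)}+p(\delta,\dot\gamma)=F(\gamma,\dot\gamma)$, this is exactly \eqref{nablap}. For the converse implication and the internal consistency of the constant-speed normalization, I would pair \eqref{nablap} with $\dot\gamma$ in the metric $p$: the term $p(\tilde\Omega(\dot\gamma),\dot\gamma)$ vanishes by the antisymmetry built into $\tilde\Omega=\tilde\nabla^*\delta-\tilde\nabla\delta$, leaving $\tfrac12\tfrac{\de}{\de s}p(\dot\gamma,\dot\gamma)=-p(\delta,\dot\gamma)\tfrac{\de}{\de s}p(\delta,\dot\gamma)$, i.e. $\tfrac{\de}{\de s}h(\dot\gamma,\dot\gamma)=0$. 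Thus any solution of \eqref{nablap} automatically has constant $h$-speed, so the variation can be run backwards to conclude that such a curve is critical for $L$, hence an $F$-geodesic. The main obstacle is the careful bookkeeping of the first variation of $L_h$ and $A$ in the $\tilde\nabla$-calculus, in particular tracking how the cross terms produce the adjoint $\tilde\nabla^*\delta$ and how the Riemannian speed $R$ coming from $L_h$ merges with $\omega(\dot\gamma)$ coming from $A$ into the single Fermat factor $F(\gamma,\dot\gamma)$.
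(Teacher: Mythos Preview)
Your proposal is correct and follows essentially the same route as the paper: compute the first variation of the Fermat length using the Levi-Civita connection $\tilde\nabla$ of $p$, use the constant $h$-speed hypothesis to pull the factor $1/R$ through the integration by parts, and identify the curl-type operator $\tilde\Omega=\tilde\nabla^*\delta-\tilde\nabla\delta$; for the converse, pair \eqref{nablap} with $\dot\gamma$ in $p$ and use the skew-symmetry of $\tilde\Omega$ to recover constancy of $h(\dot\gamma,\dot\gamma)$. The only cosmetic difference is that you split $L=L_h+A$ and vary each summand separately, whereas the paper writes down the full Euler--Lagrange equation \eqref{eulerlagrange} for $L$ at once and then simplifies under the constant-speed assumption.
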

\begin{proof}
Consider the length functional of the Fermat metric
\begin{equation}\label{ELFermat}
L(x) = \int_0^1\left[ \sqrt{p(\delta(x),\dot x)^2 + p(\dot x,\dot
x)} + p(\delta(x),\dot x)\right]{\rm d}s.
\end{equation}
Let $\tilde\nabla$ be the Levi-Civita connection of the Riemannian
metric $p$; the Euler-Lagrange equations of the
functional \eqref{ELFermat} can
be written as
\begin{equation}\label{eulerlagrange}
-\tilde\nabla_{\dot\gamma}\left(\frac{\dot\gamma+p(\delta,\dot\gamma)\delta}{\sqrt{h(\dot\gamma,\dot\gamma)}}\right)+
\frac{p(\delta,\dot\gamma)\tilde \nabla^*\delta(\dot\gamma)}{\sqrt{h(\dot\gamma,\dot\gamma)}}+
\tilde \nabla^*\delta(\dot\gamma)-\tilde \nabla\delta(\dot\gamma)=0.
\end{equation}
Hence, if $\gamma$ is parametrized to have  constant Riemannian speed,  we get:
\baln
\tilde\nabla_{\dot\gamma}\dot\gamma
&=-\tilde\nabla_{\dot\gamma}\big(p(\delta,\dot\gamma)\delta\big)+p(\delta,\dot\gamma)\tilde \nabla^*\delta(\dot\gamma)+\sqrt{h(\dot\gamma,\dot\gamma)}\left(\tilde\nabla^*\delta(\dot\gamma)-\tilde \nabla\delta(\dot\gamma)\right)\\
&=-\frac{\de}{\de s}\big(p(\delta,\dot\gamma)\big)\delta+p(\delta,\dot\gamma)\left(\tilde \nabla^*\delta(\dot\gamma)-\tilde \nabla\delta(\dot\gamma)\right)\\
&\quad\quad+\sqrt{h(\dot\gamma,\dot\gamma)}\left(\tilde\nabla^*\delta(\dot\gamma)-\tilde \nabla\delta(\dot\gamma)\right)\\
&=F(\gamma,\dot\gamma)\tilde\Omega(\dot\gamma)-\frac{\de}{\de
s}\left(p(\delta,\dot\gamma\right)\delta. \ealn

By computing $p(\tilde\nabla_{\dot\gamma}\dot\gamma,\dot\gamma)$ we can easily  see  that any solution of \eqref{nablap} has constant $h$-Riemannian speed and
it satisfies equation \eqref{eulerlagrange}.
\end{proof}
\begin{lemma}
A geodesic $\sigma$ of $(M,F)$, $F$ defined in
\eqref{RandersRiemannian}, parametrized  with   constant Randers
speed $\sqrt{h(\dot\sigma,\dot\sigma)}+p(\delta,\dot\sigma)$
satisfies  the equation \beq\label{nablapRanders} \tilde
\nabla_{\dot\sigma}\dot\sigma=F(\sigma,\dot\sigma)\tilde\Omega(\dot\sigma)-
\frac{\frac{\de}{\de
s}\left(p(\delta,\dot\sigma)\right)}{\sqrt{h(\dot\sigma,\dot\sigma)}}\left(\dot\sigma+F(\sigma,\dot\sigma)\delta\right).
\eeq
\end{lemma}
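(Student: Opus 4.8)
The plan is to follow the same scheme used to pass from \eqref{geomagRiem} to \eqref{geomag2} in the Randers subsection, and the one behind the preceding lemma: all three start from the single Euler--Lagrange equation \eqref{eulerlagrange} and differ only in the parametrization imposed at the very end. The preceding lemma obtained \eqref{nablap} by assuming constant Riemannian speed, which let $\sqrt{h(\dot\sigma,\dot\sigma)}$ be pulled out of the covariant derivative as a constant. Here I keep that factor general.

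First I would expand the term $\tilde\nabla_{\dot\sigma}\!\left(\frac{\dot\sigma+p(\delta,\dot\sigma)\delta}{\sqrt{h(\dot\sigma,\dot\sigma)}}\right)$ in \eqref{eulerlagrange} by the quotient rule for $\tilde\nabla$, \emph{without} assuming $\sqrt{h(\dot\sigma,\dot\sigma)}$ constant, so that a term proportional to $\frac{\de}{\de s}\sqrt{h(\dot\sigma,\dot\sigma)}$ survives. Multiplying through by $\sqrt{h(\dot\sigma,\dot\sigma)}$, expanding $\tilde\nabla_{\dot\sigma}\big(p(\delta,\dot\sigma)\delta\big)$, grouping $\tilde\nabla^*\delta(\dot\sigma)-\tilde\nabla\delta(\dot\sigma)=\tilde\Omega(\dot\sigma)$, and using $\sqrt{h(\dot\sigma,\dot\sigma)}+p(\delta,\dot\sigma)=F(\sigma,\dot\sigma)$, I expect the parametrization-free geodesic equation
\[
\tilde\nabla_{\dot\sigma}\dot\sigma = F(\sigma,\dot\sigma)\tilde\Omega(\dot\sigma) - \frac{\de}{\de s}\big(p(\delta,\dot\sigma)\big)\delta + \frac{\frac{\de}{\de s}\sqrt{h(\dot\sigma,\dot\sigma)}}{\sqrt{h(\dot\sigma,\dot\sigma)}}\big(\dot\sigma+p(\delta,\dot\sigma)\delta\big).
\]
As a consistency check, putting $\frac{\de}{\de s}\sqrt{h(\dot\sigma,\dot\sigma)}=0$ recovers \eqref{nablap}.

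Next I would impose the constant Randers speed hypothesis. Differentiating the relation $F(\sigma,\dot\sigma)=\sqrt{h(\dot\sigma,\dot\sigma)}+p(\delta,\dot\sigma)$, which is now constant along $\sigma$, yields
\[
\frac{\de}{\de s}\sqrt{h(\dot\sigma,\dot\sigma)}=-\frac{\de}{\de s}\big(p(\delta,\dot\sigma)\big),
\]
the exact analogue of the substitution used in the Randers case, with $p(\delta,\dot\sigma)$ playing the role of $\omega(\dot\sigma)$. Inserting this in the general equation, the two terms carrying $\delta$ merge: their common coefficient becomes $-\frac{\de}{\de s}\big(p(\delta,\dot\sigma)\big)\big(1+\frac{p(\delta,\dot\sigma)}{\sqrt{h(\dot\sigma,\dot\sigma)}}\big)$, which by $\sqrt{h(\dot\sigma,\dot\sigma)}+p(\delta,\dot\sigma)=F(\sigma,\dot\sigma)$ equals $-\frac{\de}{\de s}\big(p(\delta,\dot\sigma)\big)\frac{F(\sigma,\dot\sigma)}{\sqrt{h(\dot\sigma,\dot\sigma)}}$. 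Factoring $\frac{\frac{\de}{\de s}(p(\delta,\dot\sigma))}{\sqrt{h(\dot\sigma,\dot\sigma)}}$ out of the surviving $\dot\sigma$ and $\delta$ contributions then gives precisely the term $-\frac{\frac{\de}{\de s}(p(\delta,\dot\sigma))}{\sqrt{h(\dot\sigma,\dot\sigma)}}\big(\dot\sigma+F(\sigma,\dot\sigma)\delta\big)$ of \eqref{nablapRanders}.

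Everything is routine once the general equation is in hand; the one place demanding care---and the step I expect to be the main obstacle---is the recombination of the two $\delta$-terms, where the identity $\sqrt{h(\dot\sigma,\dot\sigma)}+p(\delta,\dot\sigma)=F(\sigma,\dot\sigma)$ is exactly what collapses them into the single clean multiple of $\dot\sigma+F(\sigma,\dot\sigma)\delta$ on the right-hand side.
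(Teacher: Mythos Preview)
Your proof is correct and follows essentially the same route as the paper: both start from the Euler--Lagrange equation \eqref{eulerlagrange}, use the constant Randers speed hypothesis to obtain $\frac{\de}{\de s}\sqrt{h(\dot\sigma,\dot\sigma)}=-\frac{\de}{\de s}\big(p(\delta,\dot\sigma)\big)$, and then simplify; you merely make the intermediate ``general'' equation explicit where the paper leaves the manipulation implicit. The only addition in the paper's proof is a brief remark on the converse---that solutions of \eqref{nablapRanders} automatically have constant Randers speed---which you omit, but the lemma as stated is one-directional, so your argument is complete.
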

\begin{proof}
Since $\sigma$ has constant Randers speed, we have that
$$\frac{\de}{\de s}\sqrt{h(\dot\sigma,\dot\sigma)}=-\frac{\de}{\de
s}(p(\delta,\dot\sigma)).$$  Using this  equality in
\eqref{eulerlagrange},  we obtain \eqref{nablapRanders}. Computing
$p(\tilde\nabla_{\dot\sigma}\dot\sigma,\dot\sigma)$, we deduce that
the solutions of \eqref{nablapRanders} have constant Randers speed,
so that they are solutions of the Euler-Lagrange equations
\eqref{eulerlagrange}.
\end{proof}
We observe that a link between the geodesics of a Randers metric and
those of a stationary spacetime also exists for timelike geodesics
of $(L,l)$. Indeed, each timelike geodesic of $(L,l)$, $l$ as in
\eqref{l}, can be  seen as the projection on $L$ of a lightlike
geodesic in the stationary spacetime $ (\tilde L,\tilde l)$, where
$\tilde L=M\times\R\times\R$ and
\begin{equation}\label{extend}\tilde l((y,v,\tau),(y,v,\tau))=g_0(y,y)+v^2 + 2g_0(\delta,y)\tau -\beta(x)\tau^2.
\end{equation}
More precisely, as it was observed in \cite[Section 4.3]{CJM08}, a
curve $z(s)  = (x(s),u(s),t(s))$ in $\tilde L$ is a  lightlike
geodesic if and only if $(x(s),t(s))$ is a timelike geodesic of
$(L,l)$ and $\dot u(s)$ is  constant and equal to $E$, where
$-E^2=l\big((\dot x(s),\dot t(s)),(\dot x(s),\dot t(s))\big)$.  We
recall that a timelike geodesic is parametrized with respect to {\em
proper time} if $E=1$. As a  consequence, the existence of timelike
geodesics with arrival proper time equal to a given $T>0$ and
joining a point $(x_0,\varrho_0)$ to a timelike curve
$\ell(\varrho)=(x_1,\varrho)$ can be deduced  from the existence of
geodesics connecting $(x_0,0)$ to $(x_1,T)$  on the manifold
$N=M\times \R$ endowed with the Fermat metric $\tilde F$, where
$\tilde F$ is given by \beq\label{RandersforE} \tilde
F((x,u),(y,v))=\sqrt{\frac{1}{\beta(x)}(g_0(y,y)+v^2)+\frac{1}{\beta(x)^2}g_0(\delta,y)^2
}+\frac{1}{\beta(x)}g_0(\delta,y), \eeq for all $((x,u),(y,v))\in
TN$. Indeed, a curve $(x,t): [0,T]\rightarrow L$ is a future-pointing
timelike geodesic of $(L,l)$, parametrized with respect to proper
time,  if and only if $[0,T]\ni s\rightarrow(x(s), u(s),t(s))\in
\tilde L$ is a lightlike geodesic (and therefore $u(s)=s$ up to an
initial constant).  At the same time, this fact is equivalent to the
requirement that the curve $[0,T]\ni s\rightarrow(x(s), u(s))\in N$
is  a geodesic of $(N,\tilde F)$,  parametrized with constant
Riemannian speed \footnote{Notice that any regular curve
$\gamma\colon [0,T]\to M$ in a Randers manifold $(M,F)$, parametrized
with constant Randers speed, can be parametrized on the same
interval $[0,T]$ with constant Riemannian speed.}  and $t=t(s)$
equal, up to an additive  constant, to the length with respect to
$\tilde F$ of the curve $(x(r),u(r))$, $r\in [0, s]$.

Before stating the main result of this section, we need the
following definition:
\bd
Let $(L,l)$ be a Lorentzian manifold,
$p\in L$ and $\ell\colon(a,b)\to L$ a timelike curve such that
$p\not\in\ell((a,b))$. We say that $p$ and $\gamma$ are {\em future
lightlike} (resp. {\em $T$-timelike}) {\em non-conjugate,}  if  the
points $p$ and $\gamma(1)$ (resp. $\gamma(T)$) are non-conjugate
along $\gamma$, for all the future-pointing lightlike  geodesics
$\gamma\colon[0,1]\to L$ (resp. timelike geodesics $\gamma\colon
[0,T]\to L$ parametrized with respect to proper time) such that
$\gamma(0)=p$ and $\gamma(1)\in \ell((a,b))$ (resp.
$\gamma(T)\in\ell((a,b))$). \ed \bere\label{nonconj} It can be
proved (see Theorem~3.2 of \cite{CJM09}) that  if $(L,l)$ is a
standard stationary spacetime then a point $(x_0,\varrho_0)\in L$
and  a curve $\ell(\varrho)=(x_1,\varrho)$, with $x_0,x_1\in M$,
$x_0\neq x_1$, are future lightlike non-conjugate if and only if
$x_0$ and $x_1$ are non-conjugate in the Randers manifold $(M,F)$. \ere
\bere\label{nonconj2} Analogously, if the point   $(x_0,\varrho_0)$
and the curve $\ell$ are future $T$-timelike non-conjugate, then the
points $(x_0,0)$ and $(x_1,T)$ are non-conjugate in $(N,\tilde F)$.
This can be seen by using the extended stationary spacetime $(\tilde
L, \tilde l)$ and the associated Randers  manifold $(N, \tilde F)$.
Indeed, by the fact that any Jacobi vector field along a geodesic in
$(\tilde L, \tilde l)$ has $u$ component which is an affine
function, if the points $(x(0),t(0))=(x_0,\varrho_0)$ and
$(x(T),t(T))\in\ell(\R)$ are non-conjugate along any  timelike
geodesic $s\in[0,T]\mapsto (x(s),t(s))$ in $(L,l)$  parametrized
with respect to proper time and connecting them, then the points
$(x(0),0,t(0))$ and $(x(T),T,t(T))$ are non-conjugate along any
lightlike geodesic $(x(s),s,t(s))$ in $(\tilde L,\tilde l)$
connecting them.  Therefore,  by \cite[Theorem~3.2]{CJM09}, the
points $(x_0,0)$ and $(x_1,T)$ are non-conjugate in $(N,\tilde F)$.
\ere Next proposition follows the same lines as Proposition 4.7 in
\cite{GiMaPi01}, but we point out that  in the  latter there is an
error in the  hypotheses  that $\delta$ and $\beta$ have to
satisfy. Thus, for the sake of clearness, we  redo  the  proof with
slight changes. In addition, we obtain  a new result about the
finiteness of the number of timelike geodesics parametrized with
respect to proper time on a given interval.
\begin{proposition}\label{improve}
Let $(L,l)$ be a standard stationary Lorentzian manifold, with $l$
as in \eqref{l}. Assume that $(M,g_0)$ admits a $C^2$ convex
function $f\colon M\to\R$  with a minimum point and strictly
positive definite Hessian $H_f^{g_0}$. If
\[
\sup_{x\in M}\frac{|\delta|_0}{\sqrt{\beta(x)}}\leq C,\] for some
$C\in \mathbb{R}^+$, and the functions
$\|\nabla\delta\|_0/\sqrt{\beta(x)}$ and $|\nabla\beta|_0/\beta(x)$
are small enough, then there exists at most a  finite number of
future-pointing lightlike geodesics joining the point
$(x_0,\varrho_0)$ with the curve $ \ell(\varrho)=(x_1,\varrho)$,
$(x_0,\varrho_0)$ and $\ell$ being future lightlike non-conjugate.
Moreover if the point $(x_0,\varrho_0)$ and the curve $\ell$ are
future $T$-timelike non-conjugate, the number of
future-pointing timelike geodesics from $(x_0,\varrho_0)$ to $\ell$
and having arrival proper time equal to $T$ is also at most finite.
\end{proposition}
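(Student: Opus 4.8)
The plan is to transport the whole problem from the spacetime $(L,l)$ to the Fermat--Randers manifold $(M,F)$ and then invoke Theorem~\ref{geonfinito}. By the lightlike correspondence recalled before \eqref{RandersRiemannian} (\cite[Theorem 4.5]{CJM08}), the future-pointing lightlike geodesics of $(L,l)$ issuing from $(x_0,\varrho_0)$ and meeting $\ell(\varrho)=(x_1,\varrho)$ are in one-to-one correspondence with the $F$-geodesics of $(M,F)$ from $x_0$ to $x_1$; moreover, by Remark~\ref{nonconj}, the future lightlike non-conjugacy of $(x_0,\varrho_0)$ and $\ell$ is exactly the non-conjugacy of $x_0$ and $x_1$ in $(M,F)$. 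Hence it suffices to show that the non-conjugate points $x_0$ and $x_1$ are joined by only finitely many $F$-geodesics, and for this I would check the hypotheses of Theorem~\ref{geonfinito} for $(M,F)$: forward/backward completeness, and a $C^2$ function with everywhere positive definite Hessian and a minimum.

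First I would settle completeness. The Fermat metric \eqref{RandersRiemannian} is the Randers metric built on the Riemannian metric $h(\cdot,\cdot)=p(\delta,\cdot)^2+p(\cdot,\cdot)$ (with $p=\tfrac1\beta g_0$) and the one-form $\omega=p(\delta,\cdot)$. Since $p(\delta,v)^2\le p(\delta,\delta)\,p(v,v)$ and $p(\delta,\delta)=|\delta|_0^2/\beta\le C^2$, one obtains $\|\omega\|_x^2\le C^2/(1+C^2)<1$ uniformly, that is \eqref{superscemo} holds; together with completeness of the underlying Riemannian metric this gives, by Remark~\ref{completo} applied to the Fermat base $h$, that $(M,F)$ is forward and backward complete.

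The core step is to show that the given $g_0$-convex function $f$, with $H^{g_0}_f(v,v)\ge\lambda(x)|v|_0^2$, stays strictly convex for $F$ once $\|\nabla\delta\|_0/\sqrt\beta$ and $|\nabla\beta|_0/\beta$ are small. I cannot quote Proposition~\ref{convessoRanders} directly, because there convexity is required with respect to the Randers base metric $h$, whereas ours is given with respect to $g_0$; instead I would differentiate $\rho(s)=f(\sigma(s))$ twice along an $F$-geodesic $\sigma$ parametrized with constant Randers speed, using the Fermat geodesic equation \eqref{nablapRanders}. This produces $H_f(\dot\sigma,\dot\sigma)=H^p_f(\dot\sigma,\dot\sigma)+F\,\de f(\tilde\Omega(\dot\sigma))-(\dots)$, where, exactly as in the passage from \eqref{deriv}--\eqref{due} to \eqref{hf}, the term $\tfrac{\de}{\de s}p(\delta,\dot\sigma)$ is eliminated by re-inserting \eqref{nablapRanders}. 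The main obstacle, and the only delicate computation, is to rewrite this in terms of $g_0$: the conformal change $p=\tfrac1\beta g_0$ turns $H^p_f$ into $H^{g_0}_f$ plus terms proportional to $\de f$ and $\nabla\beta/\beta$, while $\tilde\nabla\delta$, $\tilde\Omega$ and the speed normalization (controlled by $\|\omega\|<1$, so that $F$ and $|\cdot|_0$ are comparable) contribute terms carrying a factor $\|\nabla\delta\|_0/\sqrt\beta$, $|\nabla\beta|_0/\beta$ or the bounded quantity $|\delta|_0/\sqrt\beta\le C$. Collecting the estimates yields $H_f(y,y)\ge\big(\lambda(x)-\text{(small)}\big)|y|_0^2>0$ for every $(x,y)\in TM\setminus0$ provided the two quantities are small enough; thus $H_f$ is positive definite and $f$ has a minimum, so Theorem~\ref{geonfinito} applies and gives the finiteness of $F$-geodesics, hence of the lightlike ones.

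For the timelike statement I would repeat the argument on the extended data. By the discussion around \eqref{extend}--\eqref{RandersforE}, the future-pointing timelike geodesics of $(L,l)$ with arrival proper time $T$ from $(x_0,\varrho_0)$ to $\ell$ correspond to the $\tilde F$-geodesics of $N=M\times\R$ joining $(x_0,0)$ to $(x_1,T)$, and by Remark~\ref{nonconj2} the $T$-timelike non-conjugacy of $(x_0,\varrho_0)$ and $\ell$ yields non-conjugacy of these two points in $(N,\tilde F)$. The extended Fermat metric $\tilde F$ in \eqref{RandersforE} has the same shape as \eqref{RandersRiemannian} over the product Riemannian metric $g_0+\de u^2$, with the same $\delta$ (having no $u$-component) and the same $\beta$; hence the bound $|\delta|_0/\sqrt\beta\le C$ and the smallness of $\|\nabla\delta\|_0/\sqrt\beta$ and $|\nabla\beta|_0/\beta$ are unchanged. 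Taking the product convex function $\tilde f(x,u)=f(x)+u^2$, which has a minimum and positive definite Hessian for $g_0+\de u^2$, the computation of the previous paragraph shows $\tilde f$ is strictly convex for $\tilde F$; Theorem~\ref{geonfinito} then bounds the number of $\tilde F$-geodesics joining the two non-conjugate points, completing the proof.
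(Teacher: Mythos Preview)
Your proposal is correct and follows essentially the same route as the paper: reduce via the Fermat correspondence and Remarks~\ref{nonconj}--\ref{nonconj2} to checking the hypotheses of Theorem~\ref{geonfinito} on $(M,F)$ (resp.\ $(N,\tilde F)$), establish completeness from the uniform bound $|\delta|_0/\sqrt\beta\le C$ via Remark~\ref{completo}, and show that $f$ (resp.\ $f+u^2$) remains strictly convex by differentiating along a Fermat geodesic, substituting \eqref{nablapRanders}, and using the conformal change $p=\beta^{-1}g_0$ to express everything through $g_0$, $\|\nabla\delta\|_0/\sqrt\beta$ and $|\nabla\beta|_0/\beta$. The only cosmetic difference is that the paper writes $\rho''=H^{g_0}_f(\dot x,\dot x)+g_0(\nabla f,\nabla_{\dot x}\dot x)$ and then bounds $|\nabla_{\dot x}\dot x|_0$ directly (passing through $\tilde\nabla$ and eliminating $\tfrac{\de}{\de s}p(\delta,\dot x)$ as in \eqref{derivata}), whereas you first pass through $H^p_f$ and then convert the Hessian; the ingredients and estimates are identical.
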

\begin{proof}
By definition of strictly positive definite Hessian (with respect to
the metric $g_0$),   there exists a function $\lambda_0:M\to
(0,+\infty)$ such that
$$H_f^{g_0}(v,v)\geq\lambda_0(x)|v|_0^2,$$
 for all $x\in M$ and $v\in T_xM$.
Let $x:[a,b]\to M$ be a geodesic of $(M,F)$, $F$ as in
\eqref{RandersRiemannian}, and define $\rho(s)=f(x(s))$. Then  $f$
is  strictly  convex for $F$ if $\rho''(s)>0$ for every geodesic
$x$. We compute $\rho''$ using the Hessian of $f$ with respect to
$g_0$: \beq\label{convessa} \rho''(s)=H_f^{g_0}(\dot x,\dot
x)+g_0(\nabla f,\nabla_{\dot x}\dot x)\geq H^{g_0}_f(\dot x,\dot
x)-|\nabla f|_0|\nabla_{\dot x}\dot x|_0. \eeq From Eq.
\eqref{nablapRanders}, observing that
$$|\dot x+
F(x,\dot x)\delta|_1=F(x,\dot x)\sqrt{1+|\delta|_1^2},$$
 we get
\begin{equation}\label{segundanabla}
|\tilde\nabla_{\dot x}{\dot x}|_1\leq 2 F(x,\dot x)\|\tilde\nabla \delta\|_1|\dot x|_1+\frac{|\frac{\de}{\de s}(p(\dot x,\delta))|}{\sqrt{|\dot x|_1^2+p(\delta,\dot x)^2}}F(x,\dot x)\sqrt{1+|\delta|_1^2}.
\end{equation}
By using Eq. \eqref{nablapRanders} again in
\begin{equation*}
\frac{\de}{\de s}\left(p(\dot x,\delta)\right)=p(\tilde\nabla_{\dot x}\dot x,\delta)+p(\dot x,\tilde\nabla_{\dot x}\delta)
\end{equation*}
we obtain \beq \frac{\de}{\de s}p(\dot x,\delta)=\frac{\sqrt{|\dot
x|_1^2+p(\delta,\dot x)^2}}{F(x,\dot
x)(1+|\delta|_1^2)}\big(F(x,\dot x)p\big(\tilde\Omega(\dot
x),\delta\big)+p(\dot x,\tilde\nabla_{\dot
x}\delta)\big).\label{derivata} \eeq Finally, substituting Equation
\eqref{derivata} into \eqref{segundanabla},  by  $F(x,\dot x)\leq
|\dot x|_1(1+2|\delta|_1)$ and  the  Cauchy-Schwartz  inequality, we
deduce that
\begin{equation}\label{nablafinal}
|\tilde\nabla_{\dot x}{\dot x}|_1\leq |\dot x|_1^2\|\tilde\nabla\delta\|_1 H(|\delta|_1),
\end{equation}
where
$$H(r)=\frac{1}{\sqrt{1+r^2}}\left(2(1+2r)(\sqrt{1+r^2}+r)+1\right).$$
We observe that for every couple of vector fields $X$,$Y$ of the
manifold $M$, it holds that
\begin{equation*}
\tilde\nabla_XY=\nabla_XY+\frac{\beta}{2}((X\beta^{-1})Y+(Y\beta^{-1})X-g_0(X,Y)\nabla \beta^{-1})
\end{equation*}
(see, for example, \cite[p. 181]{Carmo92}). Hence, after some calculations, we  get
\begin{align}\label{preciosa}
|\nabla_{\dot x}\dot x|_0&\leq |\tilde\nabla_{\dot x}\dot x|_0+\frac{3|\nabla\beta|_0}{2\beta(x)}|\dot x|_0^2;&
\|\tilde\nabla\delta\|_0&\leq\|\nabla\delta\|_0+\frac{3}{2}|\delta|_0\frac{|\nabla\beta|_0}{\beta(x)}.
\end{align}
As $\|\tilde\nabla \delta\|_1=\|\tilde\nabla \delta\|_0$ and
$|\tilde\nabla_{\dot x}\dot x|_0=\sqrt{\beta(x)}|\tilde\nabla_{\dot
x}\dot x|_1$, by using inequalities \eqref{nablafinal} and
\eqref{preciosa} we get
\begin{align*}
|\nabla_{\dot x}\dot x|
_0\leq |\dot x|^2_0\left[\left(\frac{\|\nabla\delta\|_0}{\sqrt{\beta(x)}}+\frac{3}{2}\frac{|\delta|_0}{\sqrt{\beta(x)}}\frac{|\nabla \beta|_0}{\beta(x)}\right)H\left(\frac{|\delta|_0}{\sqrt{\beta(x)}}\right)+\frac{3}{2}\frac{|\nabla\beta|_0}{\beta(x)}\right].
\end{align*}
Since we have assumed that  $\sup_{x\in
M}\frac{|\delta|_0}{\sqrt{\beta(x)}}\leq C$,  if
$\frac{\|\nabla\delta\|_0}{\sqrt{\beta(x)}}$ and
$\frac{|\nabla\beta|_0}{\beta(x)}$ are small enough, Eq.
\eqref{convessa} implies that $f$ is a  strictly  convex function.
Moreover, let us observe that the  hypothesis $\sup_{x\in
M}\frac{|\delta|_0}{\sqrt{\beta(x)}}\leq C$, for  some  $C\in
\mathbb{R}$ and completeness of $g_0$, imply forward and backward
completeness of the Fermat metric (see Remark \ref{completo} and
\cite[Remark 4.13, Eq. (47)]{CJM08}).  From Remark~\ref{nonconj},
the points $x_0$ and $x_1$ are non-conjugate in $(M,F)$ and  then by
Theorem~\ref{geonfinito}   we conclude that the number of lightlike
geodesic is finite. In the case of geodesics parametrized with
respect to proper time and having fixed arrival proper time $T$,  we
observe that the metric $\tilde l$ in \eqref{extend} is a stationary
Lorentzian metric with $\tilde \delta=(\delta,0)$ and
$\tilde\beta(x,u)=\beta(x)$, so that we aim to apply the first part
of the theorem to $(\tilde L, \tilde l)$, the point
$(x_0,0,\varrho_0)$ and the line $\R\ni\rho\mapsto
(x_1,T,\rho)\in\tilde L$.  It is clear that the hypotheses on
$\tilde\delta$ and $ \tilde \beta$ are also satisfied in this case.
To show the existence of a convex function, we proceed as follows:
consider a real strictly convex function  $g:\mathbb{R}\to
\mathbb{R}$ having a minimum point.  The summation
$f+g:M\times\mathbb{R}\to\mathbb{R}$ defined as
$(f+g)(x,y)=f(x)+g(y)$ is a strictly convex function for the metric
$g_0+\de u^2$,  and it has a minimum point.  As in the proof of
Proposition 4.14 in \cite{CJM08}, we obtain the completeness of
$(N,\tilde F)$, $\tilde F$ defined in \eqref{RandersforE}. From
Remark~\ref{nonconj2}, the points $(x_0,0)$ and $(x_1,T)$ are
non-conjugate in $(N,\tilde F)$ and applying again
Theorem~\ref{geonfinito}, we complete the proof.
\end{proof}
\end{subsection}
\begin{subsection}{Zermelo's problem of navigation on Riemannian manifolds}\label{Zermelo}
The problem that we study in this section concerns the effects of a
mild wind in a Riemannian landscape $(M,g)$. This problem is known
as {\em Zermelo's navigation problem} (see \cite{Zermel31}) and it
was treated by C. Carath\'eodory in \cite{Carath67} when the
background is $\mathbb{R}^2$. Z. Shen has recently generalized  it
to arbitrary Riemannian backgrounds in any dimension (see
\cite{Shen03}). Following \cite{BaRoSh04},
we know that if the mild wind is represented by a vector field $W$
on $M$ such that $|W|<1$, for each $x\in M$, ($|\cdot|$ is the norm
associated to $g$) the trajectories that  minimize (or more
generally  make stationary) the travel time are the geodesics of the
metric
\begin{equation}\label{ZermeloMetric}
F(x,y)=\frac{\sqrt{g(W,y)^2+|y|^2\alpha(x)}}{\alpha(x)}-\frac{g(W,y)}{\alpha(x)},
\end{equation}
where $\alpha(x)=1-|W|^2$. Metrics as in \eqref{ZermeloMetric} are
of Randers type and  in \cite{BaRoSh04}   they are used to classify
Randers metrics with constant flag curvature,  while in
\cite{Robles07} a classification of their geodesics is obtained when
$W$ is an infinitesimal homothety. Moreover these metrics are very
similar to Fermat metrics in standard stationary spacetimes. The
only difference is that the one-form in the Randers metric has the
opposite sign and there is a constraint over $\beta$, that is,
$\beta(x)=1-|\delta|^2$. Thus, a Zermelo metric is a Fermat metric
with  $\beta=\alpha$ and $\delta=-W$. \bere\label{Zermelocomplete}
If  $\sup_{x\in M} |W|=\mu<1$   and the Riemannian metric $g$ is
complete,  then  the Zermelo metric is also forward and backward
complete. This is because  from equation (47) in \cite{CJM08} we
obtain
\[\sup_{x\in M}\|\omega_x\|\leq\sup_{x\in M}|W(x)|=\mu<1,\]
 where $\|\omega\|$ is the norm of the 1-form $\omega$ with respect to the metric
\[h(y,y)=\frac{1}{\alpha(x)}g(y,y)+\frac{1}{\alpha^2(x)}g(y,W)^2.\]
 As also $\frac{1}{\alpha}g$ is complete, applying Remark~\ref{completo}, we deduce the completeness of the Zermelo metric.
\ere From the above remark, the  result in Proposition \ref{improve}
can also be proved for Zermelo metrics.
\begin{proposition}\label{Zermelofinite}
Let $(M,g)$ be a complete Riemannian manifold, $W$  be a vector
field in $M$ such that  $\sup_{x\in M}|W(x)|=\mu<1$    and
$\alpha(x)=1-|W|^2$.  Assume that $(M,g)$ admits a $C^2$ convex
function $f\colon M\to\R$  having a minimum point and strictly
positive definite Hessian. If $\sup_{x\in M}\|\nabla W\|$ is small
enough, then there exists a finite number of Zermelo geodesics
joining  two  non-conjugate  points of $(M,F)$, being $F$ the
Randers metric defined  in \eqref{ZermeloMetric}.
\end{proposition}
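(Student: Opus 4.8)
The plan is to reduce Proposition \ref{Zermelofinite} to an application of Theorem \ref{geonfinito} by verifying that its two hypotheses hold: forward/backward completeness of the Zermelo manifold $(M,F)$ and the existence of a $C^2$ function with everywhere positive definite Finslerian Hessian $H_f$ and a minimum point. The completeness is already granted by Remark \ref{Zermelocomplete}, since we assume $\sup_{x\in M}|W(x)|=\mu<1$ and $g$ complete. So the whole burden of the proof is to produce a function $f$ that is strictly convex for the Zermelo metric $F$, and the natural candidate is the given $g$-convex function $f$ with $H^{g_0}$-positive definite Hessian. Because the excerpt tells us that a Zermelo metric is exactly a Fermat metric with $\beta=\alpha=1-|W|^2$ and $\delta=-W$, the cleanest route is to quote the estimate already established inside the proof of Proposition \ref{improve} rather than rederive everything from scratch.

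First I would make the dictionary explicit: set $\beta=\alpha$, $\delta=-W$, $g_0=g$, and note $\nabla\delta=-\nabla W$, so $\|\nabla\delta\|_0=\|\nabla W\|_0$. With this substitution, the key inequality \eqref{nablafinal}--\eqref{preciosa} from the proof of Proposition \ref{improve} gives a pointwise bound
\[
|\nabla_{\dot x}\dot x|_0\leq |\dot x|_0^2\left[\left(\frac{\|\nabla W\|_0}{\sqrt{\alpha}}+\frac{3}{2}\frac{|W|_0}{\sqrt{\alpha}}\frac{|\nabla\alpha|_0}{\alpha}\right)H\!\left(\frac{|W|_0}{\sqrt{\alpha}}\right)+\frac{3}{2}\frac{|\nabla\alpha|_0}{\alpha}\right]
\]
for every geodesic $x$ of $(M,F)$ parametrized with constant Randers speed. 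The point of the constraint $\beta=\alpha=1-|W|^2$, which is special to the Zermelo case and absent in the general Fermat setting, is that $|\nabla\alpha|_0$ and $|W|_0/\sqrt{\alpha}$ are now controlled entirely by $W$ and $\nabla W$: indeed $\nabla\alpha=-\nabla(|W|^2)=-2\nabla_{(\cdot)}W\cdot W$ so $|\nabla\alpha|_0\leq 2\|\nabla W\|_0|W|_0$, while $\mu<1$ bounds $|W|_0/\sqrt{\alpha}$ by $\mu/\sqrt{1-\mu^2}=:C$. Hence the entire bracket above is dominated by a constant multiple of $\|\nabla W\|_0$, and by hypothesis $\sup_{x\in M}\|\nabla W\|$ is small.

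Next I would combine this with the convexity computation \eqref{convessa}: writing $\rho(s)=f(x(s))$ for a constant-speed $F$-geodesic, one has
\[
\rho''(s)=H_f^{g}(\dot x,\dot x)+g(\nabla f,\nabla_{\dot x}\dot x)\geq \lambda_0(x)|\dot x|_0^2-|\nabla f|_0\,|\nabla_{\dot x}\dot x|_0,
\]
where $\lambda_0>0$ comes from strict positive-definiteness of $H_f^{g}$. Substituting the bound on $|\nabla_{\dot x}\dot x|_0$ shows $\rho''(s)\geq\big(\lambda_0(x)-\text{const}\cdot\|\nabla f\|\,\|\nabla W\|\big)|\dot x|_0^2$, which is strictly positive once $\sup_{x\in M}\|\nabla W\|$ is small enough relative to $\lambda_0$ and $\|\nabla f\|$. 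Thus $f$ has positive definite Finslerian Hessian $H_f$ everywhere, and since $f$ retains its minimum point from the hypothesis, both hypotheses of Theorem \ref{geonfinito} are met; that theorem then yields finiteness of the number of $F$-geodesics between any two non-conjugate points.

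The main obstacle, and the only genuinely new step beyond invoking Proposition \ref{improve}, is handling the $\beta$-constraint correctly: in the general stationary case $\beta$ and $\delta$ are independent, but here $\beta=1-|\delta|^2$, so the terms $|\nabla\beta|_0/\beta$ appearing in \eqref{preciosa} are no longer free small quantities to be assumed away — they must be re-expressed through $\nabla W$ and shown to be absorbed into the single smallness hypothesis $\sup_x\|\nabla W\|$ small. I expect this bookkeeping, together with checking that the sign reversal of the one-form (noted in the excerpt, $\delta=-W$ versus the Fermat $+\delta$) does not affect the magnitude estimates since only norms enter \eqref{nablafinal}, to be the crux; everything else is a direct transcription of the already-proved Proposition \ref{improve} under the stated substitution.
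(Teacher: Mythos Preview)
Your proposal is correct and follows essentially the same route as the paper: the paper's proof simply cites Remark~\ref{Zermelocomplete} for completeness, observes the bound $|\nabla\alpha|/\alpha<\frac{2}{1-\mu^2}\|\nabla W\|$, and then invokes Proposition~\ref{improve}. You have unpacked that last invocation in detail---making the dictionary $\beta=\alpha$, $\delta=-W$, $g_0=g$ explicit, checking that $|W|_0/\sqrt{\alpha}\leq\mu/\sqrt{1-\mu^2}$ supplies the constant $C$, and tracing the estimates \eqref{nablafinal}--\eqref{preciosa} through to Theorem~\ref{geonfinito}---but the underlying argument is identical.
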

\begin{proof}
The completeness of the Zermelo metric follows from
Remark~\ref{Zermelocomplete}. For the existence of the convex
function, it is enough to observe that
$$\frac{|\nabla(1-|W|^2)|}{1-|W|^2}=\frac{|\nabla|W|^2|}{1-|W|^2}< \frac{2}{1-\mu^2}|W|\|\nabla W\|<\frac{2}{1-\mu^2}\|\nabla W\|$$
and also to  apply  Proposition~\ref{improve}.
\end{proof}
\end{subsection}
\end{section}


\begin{thebibliography}{10}

\bibitem{bang}
Bangert, V.: {Riemannsche {M}annigfaltigkeiten mit nicht-konstanter konvexer {F}unktion}.
{Arch. Math. (Basel)}, \textbf{31},  163--170 (1978/79)

\bibitem{BaChSh00}
Bao, D., Chern, S.S., Shen, Z.: An Introduction to {R}iemann-{F}insler
  geometry.
\newblock Graduate Texts in Mathematics. Springer-Verlag, New York (2000)

\bibitem{BaRoSh04}
Bao, D., Robles, C., Shen, Z.: Zermelo navigation on {R}iemannian manifolds.
\newblock J. Differential Geom. \textbf{66}, 377--435 (2004)

\bibitem{BiJa08}
Biliotti, L., Javaloyes, M. A.:
$t$-periodic light rays in conformally stationary spacetimes via {F}insler geometry.
\href{http://arxiv.org/abs/0803.0488}{arXiv:0803.0488v2[math.DG]} (to appear in Houston J. Math.)  (2008)

\bibitem{BisO'N69}
Bishop, R.L., O'Neill, B.: Manifolds of negative curvature.
\newblock Trans. Amer. Math. Soc. \textbf{145}, 1--49 (1969)

\bibitem{CJM08}
Caponio, E., Javaloyes, M. A., Masiello, A.: On the energy
functional on Finsler manifolds and applications to stationary
spacetimes. \href{http://arxiv.org/abs/math/0702323}{arXiv:math/0702323v3 [math.DG]} (2008)

\bibitem{CJM09}
Caponio, E., Javaloyes, M. A., Masiello, A.:
{Morse theory of causal geodesics in a stationary spacetime via Morse
theory of geodesics of a Finsler metric}. {A}nn. {I}nst. {H}. {P}oincar\'e - {A}nal. {N}on {L}in\'eaire \textbf{27}, 857--876 (2010),
\href{http://arxiv.org/abs/0903.3519}{arXiv:0903.3519v3 [math.DG]}   

\bibitem{CJS}
Caponio, E., Javaloyes, M. A., S{\'a}nchez, M.:
On the interplay between Lorentzian causality and Finsler metrics
of Randers type,
\href{http://arxiv.org/abs/0903.3501}{arXiv:0903.3501v1 [math.DG]} (2009)


\bibitem{Carath67}
Carath{\'e}odory, C.: Calculus of Variations and Partial Differential Equations of the First Order.
\newblock Holden-Day Inc., San Francisco, Calif. (1967)

\bibitem{Carmo92}
do~Carmo, M.P.: Riemannian Geometry.
\newblock Mathematics: Theory \& Applications. Birkh\"auser, Boston, MA (1992)

\bibitem{CheGro72}
Cheeger, J., Gromoll, D.: On the structure of complete manifolds of nonnegative curvature.
\newblock Ann. of Math. (2) \textbf{96}, 413--443 (1972)

\bibitem{ggp}
Giamb{\`o}, R., Giannoni, F., Piccione, P.:
Genericity of nondegeneracy for light rays in stationary
spacetimes.
Comm. Math. Phys. \textbf{287}, 903--923 (2009)

\bibitem{gj}
Giamb{\`o}, R., Javaloyes, M. A.: Addendum to ``Genericity of nondegeneracy for light rays in stationary
spacetimes''. Comm. Math. Phys. \textbf{295}, 289--291 (2010)

\bibitem{GiMaPi99}
Giannoni, F., Masiello, A., Piccione, P.: Convexity and the finiteness of the number of geodesics. {A}pplications to the multiple-image effect.
\newblock Classical Quantum Gravity \textbf{16}, 731--748 (1999)

\bibitem{GiMaPi01}
Giannoni, F., Masiello, A., Piccione, P.: On the finiteness of light rays between a source and an observer on conformally stationary space-times.
\newblock Gen. Relativity Gravitation \textbf{33}, 491--514 (2001)

\bibitem{GMPJMP}
Giannoni, F., Masiello, A., Piccione, P.: The Fermat principle in
General Relativity and applications.  J. Math. Phys. {\bf 43},
563-596 (2002)

\bibitem{GreShi81}
Greene, R. E., Shiohama, K.:
Convex functions on complete noncompact manifolds: differentiable structure.
{Ann. Sci. \'Ecole Norm. Sup. (4)}, \textbf{14}, 357--367 (1981)

\bibitem{GreShi81a}
Greene, R. E., Shiohama, K.:
Convex functions on complete noncompact manifolds: topological structure.
Invent. Math., \textbf{63}, 129--157 (1981)

\bibitem{Hassan}
Hassan, B.T.: The cut locus of a Finsler manifold.
Atti Accad. Naz. Lincei Rend. Cl. Sci. Fis. Mat. Natur. \textbf{54}, 739--744 (1974)

\bibitem{javpic}
Javaloyes, M. A., Piccione, P.:
On the singularities of the semi-{R}iemannian exponential map. {B}ifurcation
of geodesics and light rays.
In ``Variations on a century of relativity: theory and applications'',
Lect. Notes Semin. Interdiscip. Mat., V.
S.I.M. Dep. Mat. Univ. Basilicata, Potenza, 115--123 (2006)

\bibitem{JS08}
Javaloyes M. A., S{\'a}nchez, M.:
A note on the existence of standard splittings for conformally stationary spacetimes.
Classical Quantum Gravity, \textbf{25}  pp.~168001, 7 (2008)

\bibitem{PE}
Perlick V.: Gravitational lensing from a spacetime perspective.
Living Reviews Relativity {\bf 7}, (2004).

\bibitem{Rander41}
Randers, G.: On an asymmetrical metric in the fourspace of {G}eneral {R}elativity.
\newblock Phys. Rev. \textbf{59}, 195--199 (1941)

\bibitem{Robles07}
Robles, C.:
Geodesics in {R}anders spaces of constant curvature.
Trans. Amer. Math. Soc., \textbf{359}, 1633--1651 (2007)


\bibitem{Shen03}
Shen, Z.: Finsler metrics with {$\mathbf{K}=0$} and {$\mathbf{S}=0$}.
\newblock Canad. J. Math. \textbf{55}, 112--132 (2003)

\bibitem{Udriste}
{Udri{\c{s}}te, C.}: {Convex functions and optimization methods on
  {R}iemannian manifolds},
  \newblock vol.~297 of Mathematics and its Applications, Kluwer
  Academic Publishers Group, Dordrecht, 1994


\bibitem{Zermel31}
Zermelo, E.: \"{U}ber das {N}avigationsproblem bei ruhender oder
  ver\"anderlicher {W}indverteilung.
\newblock Z. Angew. Math. Mech. \textbf{11}, 114--124 (1931)

\end{thebibliography}
\end{document}